\def\l{\left}
\def\r{\right}
\def\bg{\bigg}
\def\({\bg(}
\def\){\bg)}
\def\t{\text}
\def\f{\frac}
\def\bi{\binom}
\def\eq{\equiv}
\def\Z{\mathbb Z}
\def\N{\mathbb N}
\def\R{\mathbb R}
\def\[{\begin{equation}}
\def\]{\end{equation}}
\def\<{\langle}
\def\>{\rangle}
\def\1{{\bf 1}}
\theoremstyle{plain}
\newtheorem{theorem}{Theorem}
\newtheorem{conjecture}{Conjecture}
\newtheorem{proposition}{Proposition}
\newtheorem{lemma}{Lemma}
\theoremstyle{definition}
\theoremstyle{remark}
\numberwithin{equation}{section}
\begin{document}
\title{Supercongruences concerning truncated hypergeometric series}
\author{Chen Wang}
\address {Department of Mathematics, Nanjing
University, Nanjing 210093, People's Republic of China}
\email{cwang@smail.nju.edu.cn}
\author{Hao Pan}
\address {School of Applied Mathematics, Nanjing University of Finance and Economics, Nanjing 210023, People's Republic of China}
\email{haopan79@zoho.com}

\begin{abstract}
Let $n\geq 3$ be an integer and $p$ be a prime with $p\equiv 1\pmod{n}$. In this paper, we show that
$$
_nF_{n-1}\bigg[\begin{matrix} \f{n-1}{n}&\f{n-1}{n}&\ldots&\f{n-1}{n}\\ &1&\ldots&1\end{matrix}\bigg | \, 1\bigg]_{p-1}\eq -\Gamma_p\bigg(\f{1}{n}\bigg)^n\pmod{p^3},
$$
where the truncated hypergeometric series
$$
_nF_{n-1}\bigg[\begin{matrix} x_1&x_2&\ldots&x_n\\ &y_1&\cdots&y_{n-1}\end{matrix}\bigg | \, z\bigg]_m=\sum_{k=0}^{m}\frac{z^k}{k!}\prod_{j=0}^{k-1}\frac{(x_1+j)\cdots(x_{n}+j)}{(y_1+j)\cdots(y_{n-1}+j)}
$$
and $\Gamma_p$ denotes the $p$-adic gamma function.
This confirms a conjecture of Deines, Fuselier, Long, Swisher and Tu in \cite{DFLST16}.
Furthermore, under the same assumptions, we also prove that
$$
p^n\cdot {}_{n+1}F_{n}\bigg[\begin{matrix} 1&1&\ldots&1\\
&\f{n+1}{n}&\ldots&\f{n+1}{n}\end{matrix}\bigg | \, 1\bigg]_{p-1}
\eq-\Gamma_p\l(\f{1}{n}\r)^n\pmod{p^3},
$$
which solves another conjecture in 
\cite{DFLST16}.
\end{abstract}

\keywords{truncated hypergeometric series, supercongruences, $p$-adic Gamma function, the Karlsson-Minton identity }
\subjclass[2010]{Primary 33C20; Secondary 05A10, 11B65, 11A07, 33E50}
\thanks{The first author is supported by the National Natural Science Foundation of China (Grant No. 11571162).}

\maketitle

\section{introduction}
\setcounter{lemma}{0} \setcounter{theorem}{0}
\setcounter{equation}{0}\setcounter{proposition}{0}

Define the truncated hypergeometric series
$$
{}_{n}F_{n-1}\bigg[\begin{matrix}x_1&x_2&\ldots&x_{n}\\
&y_1&\ldots&y_{n-1}\end{matrix}\bigg|\,z\bigg]_{m}:=\sum_{k=0}^m\frac{(x_1)_k(x_2)_k\cdots(x_n)_k}{(y_1)_k\cdots (y_{n-1})_k}\cdot\frac{z^k}{k!},
$$
where
$$
(x)_k=\begin{cases}
x(x+1)\cdots(x+k-1),&\text{if }k\geq 1,\\
1,&\text{if }k=0.
\end{cases}
$$
Clearly, the truncated hypergeometric series is a finite analogue of the original hypergeometric series
$$
{}_{n}F_{n-1}\bigg[\begin{matrix}x_1&x_2&\ldots&x_{n}\\
&y_1&\ldots&y_{n-1}\end{matrix}\bigg|\,z\bigg]:=\sum_{k=0}^\infty\frac{(x_1)_k(x_2)_k\cdots(x_n)_k}{(y_1)_k\cdots (y_{n-1})_k}\cdot\frac{z^k}{k!},
$$
In the recent years, many insteresting $p$-adic supercongruences concerning the truncated hypergeometric series have been established (cf. \cite{AhOn00,DFLST16,He17,Liu17,LoRa16,MS15,Mo04,Mo05,OsSc09,OsStZu,SunZH13,SunZW11,SunZW13,Ta12}).
Especially, the truncated hypergeometric series have a very close connection with the $p$-adic Gamma function. Suppose that $p$ is an odd prime. Let $\Z_p$ denote the ring of all $p$-adic integers and let $|\cdot|_p$ denote the $p$-adic norm over $\Z_p$. For each integer $n\geq 1$, define the $p$-adic Gamma function
$$
\Gamma_p(n):=(-1)^n\prod_{\substack{1\leq k<n\\ (k,p)=1}}k.
$$
In particular, set $\Gamma_p(0)=1$. And for any $x\in\Z_p$, define
$$
\Gamma_p(x):=\lim_{\substack{n\in\N\\ |x-n|_p\to 0}}\Gamma_p(n).
$$

In \cite{DFLST16}, Deines, Fuselier, Long, Swisher and Tu proposed the following beautiful conjecture.
\begin{conjecture}\label{DFLSTconj}
Suppose that $n\geq 3$ is an integer and $p$ is a prime with $p\equiv 1\pmod{n}$. Then
\begin{equation}\label{nFn1Gammap}
_nF_{n-1}\bigg[\begin{matrix} \f{n-1}{n}&\f{n-1}{n}&\ldots&\f{n-1}{n}\\ &1&\ldots&1\end{matrix}\bigg | \, 1\bigg]_{p-1}\eq -\Gamma_p\bigg(\f{1}{n}\bigg)^n\pmod{p^3}.
\end{equation}
\end{conjecture}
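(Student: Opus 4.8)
The plan is to discard the negligible tail, linearise the parameter $\frac{n-1}{n}$ around a nearby integer, expand to second order in the resulting $p$-small perturbation, and evaluate the emerging finite binomial sums by the Karlsson--Minton identity. Set $t=(p-1)/n\in\N$. Since $p\equiv 1\pmod n$ one has $\frac1n=-t+\frac pn$ in $\Z_p$, hence $\frac{n-1}{n}=(1+t)-\frac pn$. For $k\geq p-t$ the product $(\frac{n-1}{n})_k$ acquires exactly one factor divisible by $p$ while $k!$ stays a unit, so the $k$-th summand is divisible by $p^n$; as $n\geq 3$ these terms vanish modulo $p^3$ and it suffices to treat $0\leq k\leq K:=(n-1)(p-1)/n=p-1-t$. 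On this range every factor is a $p$-adic unit, and writing $\frac{(\frac{n-1}{n})_k}{k!}=\frac{(1+t)_k}{k!}\prod_{j=0}^{k-1}\Bigl(1-\frac{p/n}{1+t+j}\Bigr)$ exhibits the integer $\binom{t+k}{t}=\frac{(1+t)_k}{k!}$ together with a unipotent correction.

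Next I expand. Put $H_k=\sum_{i=t+1}^{t+k}\frac1i$ and $H_k^{(2)}=\sum_{i=t+1}^{t+k}\frac1{i^2}$. Raising to the $n$-th power, taking logarithms and truncating gives, modulo $p^3$,
\[
\Bigl(\tfrac{(\frac{n-1}{n})_k}{k!}\Bigr)^{n}\equiv\binom{t+k}{t}^{n}\Bigl(1-pH_k+\tfrac{p^2}{2}H_k^{2}-\tfrac{p^2}{2n}H_k^{(2)}\Bigr).
\]
Summing over $k$ reduces the problem to three finite sums: $\sum_{k=0}^{K}\binom{t+k}{t}^{n}$ modulo $p^3$, then $\sum_{k=0}^{K}\binom{t+k}{t}^{n}H_k$ modulo $p^2$, and finally $\sum_{k=0}^{K}\binom{t+k}{t}^{n}\bigl(\tfrac12H_k^{2}-\tfrac1{2n}H_k^{(2)}\bigr)$ modulo $p$. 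After the shift $m=t+k$ the leading sum becomes $\sum_{m=0}^{p-1}\binom{m}{t}^{n}$.

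The evaluation of these finite sums is where the Karlsson--Minton identity enters: each is a terminating hypergeometric sum with upper parameters $1+t$ and lower parameters $1$, i.e.\ with integer parameter differences $t$, so Karlsson--Minton supplies a closed form as a function of the parameters, and its first and second parameter derivatives produce exactly the $H_k$- and $H_k^{(2)}$-weighted variants needed above. For the leading sum one may alternatively observe that $\binom{m}{t}^{n}$ is a polynomial in $m$ of degree $nt=p-1$, whence $\sum_{m=0}^{p-1}\binom{m}{t}^{n}$ is computed from the power sums $\sum_{m=0}^{p-1}m^{j}$, whose values modulo $p^3$ are governed by Bernoulli numbers; the harmonic corrections are then controlled by the companion Wolstenholme-type congruences for $H_k$ and $H_k^{(2)}$.

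Finally I convert the outcome into $p$-adic Gamma values. From $1+t\leq p$ one has $t!=(-1)^{1+t}\Gamma_p(1+t)$, and the reflection formula $\Gamma_p(x)\Gamma_p(1-x)=(-1)^{a_0(x)}$ (with $a_0(x)\in\{1,\dots,p\}$, $a_0(x)\equiv x\pmod p$) together with the Lipschitz bound $|\Gamma_p(x)-\Gamma_p(y)|_p\leq|x-y|_p$ identifies $(t!)^{-1}$ with $\Gamma_p(1/n)$; the leading term thereby yields $-\Gamma_p(1/n)^n$ already modulo $p$. The main obstacle is the final matching to modulus $p^3$: one must carry the first-order term $pH_k$ and the two quadratic terms through with full precision and prove that, once summed against $\binom{t+k}{t}^{n}$, they assemble precisely into the second-order Taylor coefficients of the $p$-adic expansion of $-\Gamma_p(1/n)^n$. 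This step needs the exact Karlsson--Minton evaluations rather than mere congruences, plus careful bookkeeping of the Bernoulli and Wolstenholme contributions, and it is here that the bulk of the technical work will lie.
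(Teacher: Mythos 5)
Your overall strategy---write $\tfrac{n-1}{n}=(1+t)-\tfrac pn$, expand the summand to second order in $p$, evaluate the resulting binomial sums via Karlsson--Minton, and convert to $\Gamma_p$-values by the reflection formula---is in fact the same one the paper follows, and your preliminary steps (discarding the tail $k\geq p-t$ as divisible by $p^n$, the second-order expansion in $H_k$ and $H_k^{(2)}$, the identification of $(t!)^{-n}$ with $\pm\Gamma_p(1/n)^n$) are sound. But there is a genuine gap at exactly the point you flag as ``the main obstacle.'' The three sums your expansion produces, namely $\sum_{k\leq K}\binom{t+k}{t}^n$ modulo $p^3$, $\sum_{k\leq K}\binom{t+k}{t}^nH_k$ modulo $p^2$, and the quadratic piece modulo $p$, are \emph{not} of Karlsson--Minton type: that identity evaluates a ${}_{n+1}F_n(1)$ having one negative-integer upper parameter equal to minus the sum of the nonnegative integer upper-minus-lower parameter differences, whereas your sums are truncations of the \emph{divergent} series $\sum_{k\geq0}\binom{t+k}{t}^n$ with no terminating parameter at all. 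So Karlsson--Minton does not ``supply a closed form as a function of the parameters,'' and its parameter derivatives do not yield the $H_k$-weighted variants. The Bernoulli-number fallback for the leading sum modulo $p^3$ is likewise only a pointer, not an argument, and is essentially as hard as the theorem itself.

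The paper's proof supplies precisely the two devices your plan lacks. First, instead of computing the zeroth- and first-order sums separately, it recombines them via Taylor's formula as $\Phi(0,0)+p\,\Phi_x'(0,0)\equiv\Phi(p,0)-\tfrac{p^2}{2}\Phi_{xx}''(0,0)\pmod{p^3}$; the substitution $x=p$ turns an upper parameter into the nonpositive integer $1+m-p$, so the series genuinely terminates, and after inserting a matching pair $1-p$ upstairs and downstairs Karlsson--Minton applies and gives the exact value $(p-1)!/((1-p)_m(m!)^{n-1})$, which is then expanded in $\Gamma_p$. Second, for the coefficient of $p^2$, needed only modulo $p$, the paper inserts the factor $(1-p)_k/(1)_k\equiv1\pmod p$ to manufacture a terminating ${}_{n+1}F_n$ of Karlsson--Minton type, and compares its $\partial_x\partial_y$ and $\partial_x^2$ derivatives to isolate $\sum_k\binom{m+k}{m}^n\sum_{j\leq k}(j+m)^{-2}$ (Proposition \ref{1n1n1jn12}), using the reflection $k\mapsto p-1-m-k$ to dispose of $\sum_k\binom{m+k}{m}^n\sum_{j\leq k}j^{-2}$. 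Without these (or equivalent) mechanisms your reduction cannot be completed, so as it stands the proposal is a plausible plan rather than a proof.
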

In the same paper, Deines, Fuselier, Long, Swisher and Tu had shown that both sides of (\ref{nFn1Gammap}) are congruent modulo $p^2$.

In this paper, we shall confirm the above conjectures of Deines, Fuselier, Long, Swisher and Tu.
\begin{theorem}\label{main}
Conjecture \ref{DFLSTconj} is true.
\end{theorem}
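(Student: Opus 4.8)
The plan is to strip the truncated series down to a finite sum modulo $p^3$, recognise the resulting sum as (a reversal of) a terminating hypergeometric series of Karlsson--Minton type, evaluate it in closed form, and then convert the resulting product of Pochhammer symbols into $p$-adic $\Gamma$-values.

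First I would write the left-hand side as $\sum_{k=0}^{p-1}a_k^n$ with $a_k=((n-1)/n)_k/k!=n^{-k}\prod_{i=1}^{k}(ni-1)/k!$, and set $t=(p-1)/n$, so that $p=nt+1$. Since $ni-1\equiv 0\pmod p$ exactly when $i\equiv n^{-1}\equiv p-t\pmod p$, the factor $n(p-t)-1=p(n-1)$ shows that $a_k$ is a $p$-adic unit for $0\le k\le(n-1)t$ and has $p$-adic valuation exactly $1$ for $p-t\le k\le p-1$. Because $n\ge 3$, the tail $\sum_{k=p-t}^{p-1}a_k^n$ is divisible by $p^n$ and hence vanishes modulo $p^3$, reducing the claim to the finite sum $\sum_{k=0}^{(n-1)t}a_k^n$.

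The engine of the evaluation is the exact splitting $\frac{n-1}{n}=(t+1)-\frac{p}{n}$, which compares $a_k$ with its integral model $\binom{t+k}{k}=(t+1)_k/k!$; expanding $\prod_{j=0}^{k-1}\big(1-\frac{p}{n(t+1+j)}\big)$ to second order in $p$ gives $a_k^n\equiv\binom{t+k}{k}^n\big(1-pS_1+p^2(\tfrac12S_1^2-\tfrac{1}{2n}S_2)\big)\pmod{p^3}$, where $S_1,S_2$ are the first and second harmonic sums over $t+1,\dots,t+k$. The point is that $\binom{t+k}{k}^n=(1+t)_k^n/(1)_k^n$ is the general term of a series of Karlsson--Minton type, each numerator parameter $1+t$ exceeding the denominator parameter $1$ by the integer $t$; reversing the order of summation in the finite sum turns it into a genuinely terminating series to which the Karlsson--Minton identity (and its harmonic-weighted refinements) applies, yielding a closed form that is a ratio of Pochhammer symbols in $t$.

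It then remains to read this closed form $p$-adically. Using the reflection formula $\Gamma_p(x)\Gamma_p(1-x)=\pm1$ together with the Gauss--Legendre multiplication formula for $\Gamma_p$, the ratio of Pochhammer symbols collapses to $-\Gamma_p(1/n)^n$, the minus sign coming from a single reflection. The main obstacle is the passage from modulo $p^2$---already known, and recovered here by the leading Karlsson--Minton term alone---to modulo $p^3$: this forces control of the second-order coefficient $\tfrac12S_1^2-\tfrac{1}{2n}S_2$ weighted against $\binom{t+k}{k}^n$, and one must show, via Wolstenholme-type congruences and the internal structure of the Karlsson--Minton product, that these harmonic sums assemble into exactly the second Taylor coefficient of $\Gamma_p(1/n)^n$. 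Keeping the terminating identity exact to this precision, rather than to leading order, is the crux of the argument.
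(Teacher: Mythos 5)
Your opening reductions are sound: the tail $p-t\le k\le p-1$ does vanish modulo $p^n$, and the expansion $a_k^n\equiv\binom{t+k}{k}^n\bigl(1-pS_1+p^2(\tfrac12S_1^2-\tfrac1{2n}S_2)\bigr)\pmod{p^3}$ is correct for $k\le(n-1)t$. You have also named the right tools (Karlsson--Minton, the reflection formula for $\Gamma_p$). But there is a genuine gap precisely at the step you yourself call the crux, and the mechanism you propose for it does not work. Reversing $\sum_{k=0}^{(n-1)t}\binom{t+k}{k}^n$ gives $\binom{nt}{t}^n$ times the terminating series ${}_{n+1}F_n$ with top parameters $-(n-1)t,\dots,-(n-1)t,1$ and bottom parameters $-nt,\dots,-nt$: its terminating parameter is $-(n-1)t$, while the sum of the integral top-minus-bottom differences is $nt$, so this is \emph{not} a Karlsson--Minton series and admits no closed form as a ratio of Pochhammer symbols. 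Indeed, no exact closed form for the truncated sum $\sum_k\binom{t+k}{k}^n$ exists; only congruences hold, and establishing them modulo $p^3$ is essentially the original problem. Worse, your decomposition requires three separate inputs --- $\sum_k\binom{t+k}{k}^n$ modulo $p^3$, $\sum_k\binom{t+k}{k}^nS_1(k)$ modulo $p^2$, and the $S_1^2$- and $S_2$-weighted sums modulo $p$ --- and none of these evaluations is actually supplied.

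The paper avoids having to evaluate the first two of these at all. With $m=(p-1)/n$ it introduces the two-variable deformation $\Phi(x,y)={}_nF_{n-1}[1+m-x,1+m-y,\dots,1+m-y;1,\dots,1;1]_{p-1}$, writes the target as $\Phi(p/n,p/n)$, Taylor-expands to second order, and then eliminates the unknown quantities $\Phi(0,0)$ and $\Phi_x'(0,0)$ by comparing with $\Phi(p,0)$, which (after inserting the trivial pair $(1-p)/(1-p)$) \emph{is} an exact Karlsson--Minton series and equals $(p-1)!/\bigl((1-p)_m(m!)^{n-1}\bigr)$. Only second-derivative terms survive this cancellation, and those need to be known merely modulo $p$; they are handled by a separate proposition built on another Karlsson--Minton deformation $\Psi(x,y)$ together with the reversal symmetry $k\mapsto p-1-m-k$ --- which is what your reversal idea actually yields, but only modulo $p$, not to the precision you need. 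Without some analogue of this cancellation device, your outline cannot reach $p^3$.
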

Furthermore, 
Deines, Fuselier, Long, Swisher and Tu \cite[Eq. (4) of Section 7]{DFLST16} also conjectured that
\begin{equation}\label{nFn1Gammap2}
p^n\cdot {}_{n+1}F_{n}\bigg[\begin{matrix} 1&1&\ldots&1\\
&\f{n+1}{n}&\ldots&\f{n+1}{n}\end{matrix}\bigg | \, 1\bigg]_{p-1}
\eq-\Gamma_p\l(\f{1}{n}\r)^n\pmod{p^3},
\end{equation}
where $n\geq 3$ is an integer and $p$ is a prime with $p\equiv 1\pmod{n}$. 
\begin{theorem}\label{main2}
(\ref{nFn1Gammap2}) is true.
\end{theorem}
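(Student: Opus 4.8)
The plan is to deduce Theorem~\ref{main2} from Theorem~\ref{main} by reversing the order of summation. Set $a=\f1n$. Since $(1)_k=k!$, the series in Theorem~\ref{main} and in \eqref{nFn1Gammap2} unfold into sums of $n$-th powers,
$$
\sum_{k=0}^{p-1}\l(\f{(1-a)_k}{k!}\r)^{n}=:\sum_{k=0}^{p-1}A_k\qquad\text{and}\qquad p^{n}\sum_{k=0}^{p-1}\l(\f{k!}{(1+a)_k}\r)^{n}=:\sum_{k=0}^{p-1}b_k^{n},
$$
where $b_k=\f{p\,k!}{(1+a)_k}$. By Theorem~\ref{main} the first sum is $\eq-\Gamma_p(a)^n\pmod{p^3}$, so it suffices to prove $\sum_k b_k^{n}\eq\sum_k A_k\pmod{p^3}$.

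Next I would reverse the summation $k\mapsto p-1-k$ in $\sum_k b_k^{n}$. A direct manipulation of Pochhammer symbols gives
$$
\f{(p-1-j)!}{(1+a)_{p-1-j}}=\f{(p-1)!}{(1+a)_{p-1}}\cdot\f{(1-a)_j}{j!}\prod_{i=1}^{j}\f{1-p/(i-a)}{1-p/i},
$$
so, writing $P=\f{p!}{(1+a)_{p-1}}$ and $C_j=\prod_{i=1}^{j}\l(\f{1-p/(i-a)}{1-p/i}\r)^{n}$, we obtain $b_{p-1-j}^{n}=P^{n}A_jC_j$ and hence
$$
\sum_{k=0}^{p-1}b_k^{n}-\sum_{k=0}^{p-1}A_k=\sum_{j=0}^{p-1}A_j\l(P^{n}C_j-1\r).
$$
Here $P$ is a $p$-adic unit: in $(1+a)_{p-1}=\prod_{j=1}^{p-1}(j+a)$ the only factor divisible by $p$ occurs at $j=d:=\f{p-1}{n}$, where $j+a=\f{nd+1}{n}=\f pn$, so it cancels the single $p$ in $p!$; one checks moreover that $P\eq1\pmod p$.

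Now let $i_0\in\{1,\ldots,p-1\}$ satisfy $ni_0\eq1\pmod p$, that is $i_0=p-d$. Then $i_0-a=\f{ni_0-1}{n}=\f{p(n-1)}{n}$ has $p$-adic valuation $1$, so for every $j\gs i_0$ the weight $A_j=\l(\f{(1-a)_j}{j!}\r)^{n}$ has valuation exactly $n\gs3$; since $P^{n}$ is a unit and each $C_j$ is a $p$-adic integer, all terms with $j\gs i_0$ vanish modulo $p^3$, and it remains to prove $\sum_{j=0}^{i_0-1}A_j(P^{n}C_j-1)\eq0\pmod{p^3}$. For $0\ls j<i_0$ every $A_j$ is a unit and every $i-a$, $i$ with $1\ls i\ls j$ is a unit, so $C_j=1-np\,H_j+O(p^2)$ with $H_j=\sum_{i=1}^{j}\l(\f1{i-a}-\f1i\r)$. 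Expanding $P^{n}=1+\al p+\be p^2+O(p^3)$ with $\al,\be$ independent of $j$, the expression $P^{n}C_j-1$ becomes a $p$-weighted combination of $H_j$, of $\sum_{i=1}^{j}\l(\f1{(i-a)^2}-\f1{i^2}\r)$, and of $H_j^2$; the task thus reduces to evaluating $\sum_{j<i_0}A_j$ and $\sum_{j<i_0}A_jH_j$ modulo $p^2$ and $\sum_{j<i_0}A_jH_j^2$ modulo $p$, together with the constant $P^{n}$ modulo $p^3$. The constant $P^{n}$ is computed from the exact value $\f pn$ of the special factor, the functional equation $\Gamma_p(x+1)=-x\Gamma_p(x)$, and the reflection formula for $\Gamma_p$. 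I expect the main obstacle to be precisely this $p^3$-accurate bookkeeping---showing that the unit prefactor $P^{n}$ exactly absorbs, against the weights $A_j$, the first- and second-order harmonic corrections carried by $C_j$. Since the required weighted evaluations are of the same Karlsson--Minton and $p$-adic Gamma type that underlies the proof of Theorem~\ref{main}, I would reuse those lemmas rather than redo the combinatorics from scratch.
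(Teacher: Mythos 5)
Your reduction is set up correctly and is in fact the same first move the paper makes: the Pochhammer identity you write for $\f{(p-1-j)!}{(1+a)_{p-1-j}}$ is exactly the paper's reversal
$\f{(1)_k}{(1+n^{-1})_k}=\f{(1)_{p-1}}{(1-p)_{p-1-k}}\cdot\f{(1-n^{-1}-p)_{p-1-k}}{(1+n^{-1})_{p-1}}$,
your $P^n$ is the paper's prefactor $p^n(1)_{p-1}^n/(1+n^{-1})_{p-1}^n$ (evaluated there via $\Gamma_p$ and Lemma \ref{G1xG2x}), and your expansion of $P^nC_j-1$ in powers of $p$ is the paper's Taylor expansion of $\Upsilon(x)=\sum_k(1-n^{-1}-x)_k^n/(1-x)_k^n$ at $x=p$ using \eqref{abkd1}--\eqref{abkd2}. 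The second-order (coefficient of $p^2$) data you need -- the weighted sums of $H_j^2$ and of $\sum_i(1/(i-a)^2-1/i^2)$ modulo $p$ -- are indeed recoverable from Theorem \ref{main}, Proposition \ref{1n1n1jn12} and Lemma \ref{1mkn1knj2}, so that part of your plan is sound.

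The genuine gap is the first-order term. After the prefactor and the sum are both expanded, what must be shown is that the coefficient of $p$ cancels modulo $p^2$, i.e.\ (in the paper's notation, its \eqref{modp2})
$$
\sum_{k=0}^{p-1}\f{(1-n^{-1})_k^n}{(1)_k^n}\sum_{\substack{1\leq j<\langle n^{-1}\rangle_{p^2}\\(j,p)=1}}\f{1}{j}\eq \sum_{k=0}^{p-1}\f{(1-n^{-1})_k^n}{(1)_k^n}\l(\sum_{i=1}^k\f{1}{i}-\sum_{i=1}^k\f{1}{i-n^{-1}}\r)\pmod{p^2},
$$
which is precisely your statement that $P^n$ "absorbs" the weighted sum $\sum_jA_jH_j$ modulo $p^2$. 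You flag this as the main obstacle and then propose to dispose of it by "reusing" the earlier lemmas, but those lemmas (Lemma \ref{1mkn1knj2}, the congruence \eqref{1mkn1knj20}, Proposition \ref{1n1n1jn12}) are all modulo $p$ only; none of them evaluates a weighted harmonic sum to the accuracy $p^2$ that is required here. The paper spends the bulk of Section 4 on exactly this point: it splits the sum into $\Sigma_1$ and $\Sigma_2$, performs several reversals and re-expansions modulo $p^2$ (its \eqref{lefthand}--\eqref{righthand3}), reduces everything to the new mod-$p$ identity \eqref{modp1} for $\sum_k\binom{m+k}{k}^n\bigl(\sum_{i\le k+m}1/i-\sum_{i\le k}1/i\bigr)\sum_{i\le k+m}1/i$, and proves that identity by a fresh application of the Karlsson--Minton machinery, namely by equating the two expressions \eqref{final1} and \eqref{final2} for $\partial^2\Psi/\partial x^2$ at the origin. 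None of this is automatic from what precedes Theorem \ref{main}, so your argument is an accurate roadmap of the proof's architecture but omits its hardest and genuinely new step.
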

In the next section, we shall establish an auxiliary proposition. In the third section, the proof of Theorem \ref{main} will  be given. Finally, Theorem \ref{main2} will be proved in the last section.

\section{An auxiliary result}
\setcounter{lemma}{0} \setcounter{theorem}{0}
\setcounter{equation}{0}\setcounter{proposition}{0}

In this section, we shall prove the following proposition, which is the key ingredient of our proof of Theorem \ref{main}.
\begin{proposition}\label{1n1n1jn12} Let $n\geq3$ be an integer and suppose that $p\equiv 1\pmod{n}$ is prime. Then
\begin{equation}\label{1n1n1jn12e}
\sum_{k=0}^{p-1}\frac{(1-n^{-1})_k^n}{(1)_k^n}\sum_{j=1}^k\frac{1}{(j-n^{-1})^2}\equiv\Gamma_p\bigg(\frac1n\bigg)^n\cdot\bigg(G_2\bigg(\frac1n\bigg)-G_1\bigg(\frac1n\bigg)^2\bigg)\pmod{p},
\end{equation}
where
$$
G_1(x):=\frac{\Gamma_p'(x)}{\Gamma_p(x)},\qquad
G_2(x):=\frac{\Gamma_p''(x)}{\Gamma_p(x)}.
$$
\end{proposition}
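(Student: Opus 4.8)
The plan is to realise the inner sum as a parameter derivative, so that the whole expression becomes the second-order part of a deformed truncated hypergeometric series, and then to evaluate that series in closed form by the Karlsson--Minton identity. Put $t=1-n^{-1}$. Since $\sum_{j=1}^{k}(j-n^{-1})^{-2}=\sum_{i=0}^{k-1}(t+i)^{-2}$ is exactly minus the coefficient of $s^2$ in $(t+s)_k(t-s)_k/(t)_k^{2}=\prod_{i=0}^{k-1}\bigl(1-s^2/(t+i)^2\bigr)$, the left-hand side of \eqref{1n1n1jn12e} equals
\[
-\frac12\,\frac{d^2}{ds^2}\Big|_{s=0}\sum_{k=0}^{p-1}\frac{(t+s)_k\,(t-s)_k\,(t)_k^{\,n-2}}{(1)_k^{\,n}}.
\]
Call this truncated series $\mathcal H(s)$; it is a ${}_nF_{n-1}$ and, crucially, it is \emph{even} in $s$. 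The evenness is the structural device that avoids the usual obstruction: a naive single-parameter differentiation in $t$ of $\sum_k(t)_k^{n}/(1)_k^{n}$ produces the square of a first-order harmonic sum, which is not itself a parameter-derivative and so is not reachable by the same identity, whereas the symmetric deformation $(t+s,t-s)$ isolates the wanted second-order harmonic sum directly.

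Next I would reduce modulo $p$. With $d=(p-1)/n$ one has $n^{-1}\equiv-d$ and $t\equiv 1+d\pmod p$. For $n\geq 3$ the $s^2$-coefficient coming from any index $k\geq p-d$ is divisible by $p$: the factor $(t)_k^{\,n}$ then has $p$-adic valuation at least $n\geq 3$, which outweighs the single order-$2$ pole (at $j=p-d$) carried by the inner sum. Hence, modulo $p$,
\[
\mathcal H(s)\equiv\sum_{k=0}^{p-1-d}\frac{(1+d+s)_k\,(1+d-s)_k\,(1+d)_k^{\,n-2}}{(k!)^{\,n}}\pmod p,
\]
a truncated hypergeometric series whose $n$ upper parameters exceed the common lower parameter $1$ by $d\pm s$ and by $d$ --- precisely the shape to which the Karlsson--Minton summation applies.

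The decisive step is to evaluate this series in closed form. Treating the integer shift $d$ and the deformation $s$ as Karlsson--Minton parameters (summing first for integral shifts, where $d\pm s$ are genuine nonnegative integers, and then continuing $p$-adically in $s$), I expect $\mathcal H(s)$ to reduce modulo $p$ to an explicit product of $p$-adic Gamma values $\mathcal G(s)$, normalised by $\mathcal G(0)\equiv-\Gamma_p(1/n)^{n}$ --- the modulo-$p$ case of \eqref{nFn1Gammap}, already available from \cite{DFLST16} --- together with $\Gamma_p(1/n)\equiv 1/d!\pmod p$ (a consequence of Wilson's theorem). Then $S=-\tfrac12\,\mathcal G''(0)$, and since $\mathcal G$ is, up to sign, a $\Gamma_p$-product, its second logarithmic derivative is governed by $(\log\Gamma_p)''=G_2-G_1^2$. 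Converting the argument to $1/n$ through the reflection identities $G_1(x)=G_1(1-x)$ and $G_1'(x)=-G_1'(1-x)$, the difference relations $G_1(x+1)=G_1(x)+1/x$ and $G_1'(x+1)=G_1'(x)-1/x^2$, and the normalisation $G_1'(1)=0$, one reaches $S\equiv\Gamma_p(1/n)^{n}\bigl(G_2(1/n)-G_1(1/n)^2\bigr)\pmod p$. As an internal check, this whole scheme is equivalent, after expanding the binomials and using the two inputs above, to the single clean congruence $\sum_{m=d}^{p-1}\binom{m}{d}^{n}H_m^{(2)}\equiv 0\pmod p$, where $H_m^{(2)}=\sum_{i=1}^{m}i^{-2}$.

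The main obstacle is the Karlsson--Minton evaluation itself: obtaining the closed form for the deformed, truncated series to the precision required, and correctly tracking the deformation parameter $s$ through the $p$-adic Gamma functions --- in particular justifying the passage from integral shifts to a $p$-adic-analytic dependence on $s$. Everything downstream, namely the valuation argument that truncates the sum and the manipulation of $G_1,G_2$ at $1/n$, is routine bookkeeping with standard properties of $\Gamma_p$.
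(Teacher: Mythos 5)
Your overall architecture is the same as the paper's: realize the inner sum as the $s^2$-coefficient of a symmetric parameter deformation and evaluate the deformed series in closed form by Karlsson--Minton. (Indeed, the paper's combination $\Psi''_{xy}(0,0)-\Psi''_{xx}(0,0)$ equals $-\tfrac12\frac{d^2}{ds^2}\Psi(s,-s)\big|_{s=0}$, so it too is exactly a symmetric second-order deformation.) The problem is the specific deformation you chose, and it is fatal to the step you yourself call the main obstacle. Your $\mathcal{H}(s)$ keeps all lower parameters equal to $1$ and moves two upper parameters to $1+d\pm s$, so the integer parameter differences required by Lemma \ref{KM} become $d+s$ and $d-s$; the Karlsson--Minton identity therefore applies only when $s$ is an integer with $|s|\le d$, giving you the closed form at just $2d+1$ points. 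But $\mathcal{H}(s)\bmod p$ is a polynomial in $s$ of degree up to $2(p-1-d)$, which exceeds $2d$ precisely because $n\ge3$; its values at those $2d+1$ integer points do not determine it, and in particular do not determine the coefficient of $s^2$ you need. So ``summing for integral shifts and then continuing $p$-adically in $s$'' is not a repairable technicality --- it is the missing proof. The paper circumvents this by deforming each upper/lower pair together, i.e.\ using $(1+m+x)_k/(1+x)_k$ and $(1+m+y)_k/(1+y)_k$, so the integer differences stay fixed at $m$ and Karlsson--Minton holds as an identity of rational functions of $(x,y)$ that can legitimately be differentiated; it also inserts the terminating factor $(1-p)_k/(1)_k\equiv 1\pmod p$ so that the series genuinely has the Karlsson--Minton shape.

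The price of the correct deformation is that it extracts $\sum_{j\le k}(j+m)^{-2}-\sum_{j\le k}j^{-2}$ rather than the first sum alone, so one still needs the auxiliary congruence $\sum_{k=0}^{p-1}\binom{m+k}{m}^n\sum_{j=1}^{k}j^{-2}\equiv 0\pmod p$ (equation \eqref{1mkn1knj20}, which the paper deduces from the reflection $k\mapsto p-1-m-k$ of Lemma \ref{1mkn1knj2} combined with the value of $\Psi(0,0)$). This is essentially the ``internal check'' congruence you write down at the end, and you do not prove it either. In summary: your valuation argument for truncating the sum and your endgame manipulations of $G_1$ and $G_2$ at $1/n$ are fine, but the proposal has two genuine gaps --- the unjustified second-order evaluation of $\mathcal{H}(s)$, which requires re-choosing the deformation so that Karlsson--Minton becomes a differentiable identity, and the unproved harmonic-sum congruence that the corrected deformation then forces on you.
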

To prove Proposition \ref{1n1n1jn12}, we need  several lemmas.
The first one is a consequence of the classical Karlsson-Minton identity \cite{Ka71}.
\begin{lemma}\label{KM}
For any nonnegative integers $m_1, \cdots, m_n$,
\begin{align*}
&_{n+1}F_{n}\bigg[\begin{matrix} -(m_1+\cdots+m_2)&b_1+m_1&\ldots&b_n+m_n\\ &b_1&\ldots&b_n\end{matrix}\bigg | \, 1\bigg]
\\=&(-1)^{m_1+\cdots+m_n}\cdot\f{(m_1+\cdots+m_n)!}{(b_1)_{m_1}\cdots(b_n)_{m_n}}.
\end{align*}
\end{lemma}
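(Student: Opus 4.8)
The plan is to recognize that, because the top parameter $-(m_1+\cdots+m_n)$ is a nonpositive integer, the series on the left terminates, and to convert the resulting finite sum into a single $M$-th finite difference of an explicit polynomial. Write $M=m_1+\cdots+m_n$. First I would use the Pochhammer splitting $(b_i)_{m_i+k}=(b_i)_{m_i}(b_i+m_i)_k=(b_i)_k(b_i+k)_{m_i}$ to collapse each upper-over-lower ratio into
$$
\frac{(b_i+m_i)_k}{(b_i)_k}=\frac{(b_i+k)_{m_i}}{(b_i)_{m_i}}.
$$
Pulling the $k$-independent constants $(b_i)_{m_i}$ out of the sum and using $(-M)_k/k!=(-1)^k\binom{M}{k}$, the left-hand side becomes
$$
\frac{1}{(b_1)_{m_1}\cdots(b_n)_{m_n}}\sum_{k=0}^{M}(-1)^k\binom{M}{k}\,Q(k),\qquad Q(k):=\prod_{i=1}^{n}(b_i+k)_{m_i}.
$$

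The next step is to read off the degree of $Q$. Each factor $(b_i+k)_{m_i}=(b_i+k)(b_i+k+1)\cdots(b_i+k+m_i-1)$ is a \emph{monic} polynomial of degree $m_i$ in the variable $k$, so $Q$ is monic of degree exactly $M$. I would then invoke the standard finite-difference identity: for any polynomial $Q$ of degree at most $M$ with leading coefficient $a_M$,
$$
\sum_{k=0}^{M}(-1)^{M-k}\binom{M}{k}Q(k)=(\Delta^M Q)(0)=M!\,a_M,
$$
the $M$-th forward difference annihilating all lower-degree terms. Applying this with $a_M=1$ and multiplying through by $(-1)^M$ gives $\sum_{k=0}^{M}(-1)^k\binom{M}{k}Q(k)=(-1)^M M!$, and substituting back yields precisely $(-1)^M M!/\big((b_1)_{m_1}\cdots(b_n)_{m_n}\big)$, as claimed.

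The argument has no genuinely hard step; the only point demanding care is the bookkeeping that turns the product of Pochhammer ratios into the single monic polynomial $Q$, together with the verification that its degree is \emph{exactly} $M$. This last fact is what makes the sum nonzero: had the degree dropped below $M$, the finite difference would vanish and the identity would fail, so the monic leading term contributed by each $(b_i+k)_{m_i}$ is essential. I would also note that, since the lemma is advertised as a consequence of the classical Karlsson--Minton summation, one could instead simply specialize that known theorem; the finite-difference derivation above has the advantage of being completely self-contained and elementary.
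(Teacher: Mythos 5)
Your proof is correct, but it follows a genuinely different route from the paper: the paper supplies no argument at all for this lemma, simply deriving it as the terminating special case (lowest top parameter equal to $-(m_1+\cdots+m_n)$ exactly) of the classical Karlsson--Minton summation, with a citation to Karlsson's 1971 paper. Your finite-difference derivation is sound at every step: the splitting $(b_i)_{m_i+k}=(b_i)_{m_i}(b_i+m_i)_k=(b_i)_k(b_i+k)_{m_i}$ correctly collapses each ratio to $(b_i+k)_{m_i}/(b_i)_{m_i}$; the identity $(-M)_k/k!=(-1)^k\binom{M}{k}$ and the termination of the series at $k=M$ are right; $Q(k)=\prod_i(b_i+k)_{m_i}$ is indeed monic of degree exactly $M$; and the evaluation
\begin{equation*}
\sum_{k=0}^{M}(-1)^{M-k}\binom{M}{k}Q(k)=(\Delta^M Q)(0)=M!
\end{equation*}
together with the sign bookkeeping gives precisely $(-1)^M M!/\bigl((b_1)_{m_1}\cdots(b_n)_{m_n}\bigr)$. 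You also implicitly repair the paper's typo ($m_2$ should read $m_n$ in the top parameter) by working with $M=m_1+\cdots+m_n$ throughout. What each approach buys: the paper's citation is instantaneous but opaque, leaning on a black-boxed summation theorem whose general form (valid for more general top parameter $-N$ with $N\geq M$) is stronger than what is needed; your argument is self-contained, elementary, and makes visible exactly why the constant on the right-hand side arises --- the monic leading term of $Q$ is the sole survivor of the $M$-th difference, and any degree drop would make the sum vanish. For a reader checking the paper's key ingredient, your derivation is arguably preferable, and your closing remark correctly identifies the relationship between the two routes.
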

If $p$ is prime and $q$ is a power of $p$, then for any $x\in\Z_p$, let $\langle x\rangle_{q}$ denote the least non-negative residue of $x$ modulo $q$. The following lemma determines $G_1(x)$ modulo $p^2$ and $G_2(x)$ modulo $p$.
\begin{lemma}\label{G1xG2x}
Suppose that $x\in\Z_p$. Then
\begin{equation}
\label{con1}G_1(x)\eq G_1(0)+\sum_{\substack{1\leq j<\langle x\rangle_{p^2}\\(j,p)=1}}\f{1}{j}\pmod{p^2},
\end{equation}
\begin{equation}
\label{con2}G_2(x)\eq G_2(0)+2G_1(0)\sum_{1\leq j<\langle x\rangle_p}\f{1}{j}+2\sum_{1\leq i<j<\langle x\rangle_p}\f{1}{ij}\pmod{p}.
\end{equation}
\end{lemma}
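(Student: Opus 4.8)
The plan is to prove both congruences first as \emph{exact} identities at non-negative integers $m$, using nothing but the functional equation of $\Gamma_p$, and then to transfer them to an arbitrary $x\in\Z_p$ by a continuity estimate. The reason for the two different moduli lies entirely in that last step: the transfer for $G_1$ can be carried out modulo $p^2$, so one represents $x$ by $\langle x\rangle_{p^2}$, whereas for $G_2$ only a modulo $p$ estimate is available, so one uses $\langle x\rangle_p$.

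First I would differentiate the functional equation. Since $\Gamma_p$ is locally analytic, the relations $\Gamma_p(x+1)=-x\,\Gamma_p(x)$ for $p\nmid x$ and $\Gamma_p(x+1)=-\Gamma_p(x)$ for $p\mid x$ hold as identities of analytic functions and may be differentiated once and twice; dividing through by $\Gamma_p(x+1)$ gives, for $p\nmid x$,
\begin{equation*}
G_1(x+1)=G_1(x)+\f1x,\qquad G_2(x+1)=G_2(x)+\f{2G_1(x)}{x},
\end{equation*}
together with $G_1(x+1)=G_1(x)$ and $G_2(x+1)=G_2(x)$ for $p\mid x$. Telescoping from $0$ up to a non-negative integer $m$ yields the exact formulas
\begin{equation*}
G_1(m)=G_1(0)+\sum_{\substack{1\ls i<m\\(i,p)=1}}\f1i,\qquad
G_2(m)=G_2(0)+2\sum_{\substack{1\ls j<m\\(j,p)=1}}\f{G_1(j)}{j},
\end{equation*}
and substituting the first into the second and isolating the diagonal produces
\begin{equation*}
G_2(m)=G_2(0)+2G_1(0)\sum_{\substack{1\ls j<m\\(j,p)=1}}\f1j
+2\sum_{\substack{1\ls i<j<m\\(i,p)=(j,p)=1}}\f1{ij}.
\end{equation*}

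Now I would specialize $m$ and pass to $x\in\Z_p$. Taking $m=\langle x\rangle_{p^2}$ in the identity for $G_1(m)$ reproduces verbatim the right-hand side of (\ref{con1}); taking $m=\langle x\rangle_p\ls p-1$ in the identity for $G_2(m)$ makes every condition $(i,p)=(j,p)=1$ automatic and reproduces the right-hand side of (\ref{con2}) modulo $p$. Thus everything reduces to the continuity statement that $x\eq y\pmod{p^2}$ forces $G_1(x)\eq G_1(y)\pmod{p^2}$, and $x\eq y\pmod{p}$ forces $G_2(x)\eq G_2(y)\pmod{p}$; this is precisely what licenses replacing $x$ by $\langle x\rangle_{p^2}$, respectively $\langle x\rangle_p$.

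This continuity is the step I expect to be the main obstacle. I would derive it from the local analyticity of $\Gamma_p$ on each residue disc $a+p\Z_p$, together with the $p$-integrality of the normalized derivatives $G_k(a):=\Gamma_p^{(k)}(a)/\Gamma_p(a)\in\Z_p$. From the convergent expansion $\Gamma_p(a+h)=\Gamma_p(a)\sum_{k\ge0}(G_k(a)/k!)h^k$, dividing the analogous expansions of $\Gamma_p'$ and $\Gamma_p''$ by it gives
\begin{equation*}
G_1(a+h)=G_1(a)+(G_2(a)-G_1(a)^2)\,h+\cdots,\qquad G_2(a+h)=G_2(a)+\cdots,
\end{equation*}
where, for the odd primes in play, the omitted terms have coefficients in $\Z_p$ and are annihilated by the relevant power of $p$ once $h\in p\Z_p$. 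Hence $G_2(a+h)\eq G_2(a)\pmod p$ and $G_1(a+h)\eq G_1(a)+(G_2(a)-G_1(a)^2)h\pmod{p^2}$; writing $x=a+h_x,\ y=a+h_y$ with $h_x\eq h_y\pmod{p^2}$ then gives $G_1(x)\eq G_1(y)\pmod{p^2}$, and similarly $G_2(x)\eq G_2(y)\pmod p$ when $h_x\eq h_y\pmod p$. The genuine work is the bookkeeping on $v_p(G_k(a)/k!)$ that guarantees the integrality of these Taylor coefficients and the convergence of the series; granting it, the lemma follows by combining the exact integer identities with these continuity estimates.
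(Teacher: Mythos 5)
Your proposal is correct in outline but takes a genuinely different route from the paper's. You differentiate the functional equation $\Gamma_p(x+1)=-x\Gamma_p(x)$ (resp.\ $-\Gamma_p(x)$ when $p\mid x$) once and twice to obtain exact recurrences $G_1(x+1)=G_1(x)+1/x$ and $G_2(x+1)=G_2(x)+2G_1(x)/x$, telescope them into \emph{exact} identities at non-negative integers $m$, and then transfer to general $x\in\Z_p$ by a single continuity estimate ($G_1$ mod $p^2$ on residue classes mod $p^2$, $G_2$ mod $p$ on residue classes mod $p$). The paper never differentiates the functional equation: for \eqref{con1} it approximates $\Gamma_p'(x)$ by the difference quotient $(\Gamma_p(x+p^h)-\Gamma_p(x))/p^h$ for large $h$ and evaluates that quotient by multiplicatively telescoping the functional equation into $\Gamma_p(p^h)\prod_{j<a}(j+p^h)/j$; for \eqref{con2} it computes $\Gamma_p(x+p)-\Gamma_p(x)$ both by this product expansion and by the order-two Taylor formula, and compares the two. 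Your reorganization is arguably cleaner: the integer-point identities carry no error terms, and all $p$-adic approximation is concentrated in one continuity lemma, avoiding the paper's bookkeeping with $\Gamma_p(p^h)$ and the choice of $h$. The price is that this continuity lemma rests on the local analyticity of $\Gamma_p$ with $p$-integral normalized Taylor coefficients $\Gamma_p^{(k)}(a)/k!$, which you correctly identify as the crux but leave unproved. I would not call this a gap disqualifying the argument, since the paper itself silently invokes the same input in its display $\Gamma_p(x+p)-\Gamma_p(x)\equiv\Gamma_p'(x)p+\Gamma_p''(x)p^2/2\pmod{p^3}$, and the required statement is a known result of Long and Ramakrishna \cite{LoRa16}; a complete write-up should cite it rather than defer it.
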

\begin{proof} (\ref{con1}) and (\ref{con2}) should not be new, though we can't find them in any literature. However, for the sake of completeness, here we give the proof of (\ref{con1}) and (\ref{con2}).
First, choose a sufficiently large integer $h$ such that
$$\Gamma_p'(x)\eq\f{\Gamma_p(x+p^h)-\Gamma_p(x)}{p^h}\pmod{p^2}\ \ \t{and}\ \ \Gamma_p'(0)\eq \f{\Gamma_p(p^h)-1}{p^h}\pmod{p^2}.$$
Let $a=\langle x\rangle_{p^{h+2}}.$ Then
\begin{align*}
G_1(x)=\f{\Gamma_p'(x)}{\Gamma_p(x)}\equiv& \f{1}{\Gamma_p(a)}\cdot\f{\Gamma_p(a+p^h)-\Gamma_p(a)}{p^h}=\f{1}{p^h}\cdot\bigg(\Gamma_p(p^h)\prod_{\substack{1\leq j<a\\(j,p)=1}}\f{j+p^h}{j}-1\bigg)
\\\eq&\f{\Gamma_p(p^h)-1}{p^h}+\sum_{\substack{1\leq j<a}}\f{1}{j}\eq \Gamma_p'(0)+\sum_{1\leq j<\langle x\rangle_{p^2}}\f{1}{j}\pmod{p^2}.
\end{align*}
Since $\Gamma_p(0)=1$, we get (\ref{con1}). Next, $b=\langle x\rangle_{p^3}$. Then
\begin{align}\label{Gammapxpx1}
\Gamma_p(x+p)-\Gamma_p(x)\eq&\Gamma_p(b+p)-\Gamma_p(b)=\Gamma_p(b)\bigg(\Gamma_p(p)\prod_{\substack{1\leq j<b\\ (j,p)=1}}\f{p+j}{j}-1\bigg)\notag
\\\eq&\Gamma_p(b)\bigg(\Gamma_p(p)\bigg(1+p\sum_{\substack{1\leq j<b\\(j,p)=1}}\f{1}{j}+p^2\sum_{\substack{1\leq i<j<b\\(ij,p)=1}}\f{1}{ij}\bigg)-1\bigg)\notag
\\\eq&\Gamma_p(x)\bigg((\Gamma_p(p)-1)+p\Gamma_p(p)\sum_{\substack{1\leq j<\langle x\rangle_{p^2}\\(j,p)=1}}\f{1}{j}+p^2\sum_{1\leq i<j<\langle x\rangle_p}\f{1}{ij}\bigg)\pmod{p^3}.
\end{align}
On the other hand, we have
\begin{equation}\label{Gammapxpx2}
\Gamma_p(x+p)-\Gamma_p(x)\eq\Gamma_p'(x)p+\f{\Gamma_p''(x)p^2}{2}\pmod{p^3}
\end{equation}
Combining (\ref{Gammapxpx1}) and (\ref{Gammapxpx2}),
we obtain
\begin{align}\label{GammapdxGammapd2x}
G_1(x)+\frac{G_2(x)p}{2}=\frac{\Gamma_p'(x)}{\Gamma_p(x)}+\f{\Gamma_p''(x)p}{2\Gamma_p(x)}\equiv
\frac{\Gamma_p(p)-1}{p}+\Gamma_p(p)\sum_{\substack{1\leq j<\langle x\rangle_{p^2}\\(j,p)=1}}\f{1}{j}+p\sum_{1\leq i<j<\langle x\rangle_p}\f{1}{ij}\pmod{p^2}
\end{align}
Note that
$$
\Gamma_p(p)\eq 1+\Gamma_p'(0)p+\f{\Gamma_p''(0)p^2}{2}\pmod{p^3}.
$$
So (\ref{con2}) is concluded by substituting (\ref{con1}) into (\ref{GammapdxGammapd2x}).
\end{proof}

\begin{lemma}\label{G10G20}
$G_1(0)^2=G_2(0).$
\end{lemma}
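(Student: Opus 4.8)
The plan is to translate the claim into a statement about the low-order Taylor coefficients of $\Gamma_p$ at the origin and then to exploit two standard functional equations of the $p$-adic Gamma function. Since $\Gamma_p(0)=1$, we have $G_1(0)=\Gamma_p'(0)$ and $G_2(0)=\Gamma_p''(0)$, so the assertion $G_1(0)^2=G_2(0)$ is equivalent to $\Gamma_p'(0)^2=\Gamma_p''(0)$. Observe that this is precisely the vanishing of
$$
(\log\Gamma_p)''(0)=\f{\Gamma_p''(0)\Gamma_p(0)-\Gamma_p'(0)^2}{\Gamma_p(0)^2}=G_2(0)-G_1(0)^2,
$$
that is, the vanishing at the origin of the derivative of the $p$-adic digamma function $\psi_p:=G_1$. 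So it suffices to show $\psi_p'(0)=0$.

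First I would recall the translation formula $\Gamma_p(x+1)=-\Gamma_p(x)$, valid for all $x$ with $|x|_p<1$, together with the reflection formula $\Gamma_p(x)\Gamma_p(1-x)=(-1)^{a_0(x)}$, where the exponent $a_0(x)$ depends only on $x$ modulo $p$. Thus the product $F(x):=\Gamma_p(x)\Gamma_p(1-x)$ is locally constant near $0$; evaluating at $x=0$ gives $\Gamma_p(0)\Gamma_p(1)=1\cdot(-1)=-1$, so in fact $F(x)\equiv-1$ on a neighbourhood of $0$. Because $\Gamma_p$ is locally analytic on $\Z_p$, this local constancy legitimizes differentiating $F$ freely and applying the product and chain rules $p$-adically.

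The key step is then to differentiate. Differentiating $F$ twice and using $F''\equiv0$ near $0$ yields
$$
\Gamma_p''(x)\Gamma_p(1-x)-2\Gamma_p'(x)\Gamma_p'(1-x)+\Gamma_p(x)\Gamma_p''(1-x)=0.
$$
Evaluating at $x=0$ and inserting the values $\Gamma_p(1)=-1$, $\Gamma_p'(1)=-\Gamma_p'(0)$, $\Gamma_p''(1)=-\Gamma_p''(0)$ (the last two obtained by differentiating $\Gamma_p(x+1)=-\Gamma_p(x)$ at $x=0$) collapses the identity to $-2\Gamma_p''(0)+2\Gamma_p'(0)^2=0$, which is exactly $\Gamma_p'(0)^2=\Gamma_p''(0)$. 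Equivalently, and more transparently, differentiating $\log F\equiv\t{const}$ once gives $\psi_p(x)=\psi_p(1-x)$, while the translation formula gives $\psi_p(1-x)=\psi_p(-x)$; hence $\psi_p$ is even near $0$, so $\psi_p'(0)=0$, i.e. $G_2(0)-G_1(0)^2=0$. I expect the only delicate points to be the bookkeeping of the sign in the reflection formula—handled cleanly by pinning the locally constant value at $x=0$ to be $-1$—and the remark that the local analyticity of $\Gamma_p$ justifies the $p$-adic calculus used; the algebra afterward is immediate.
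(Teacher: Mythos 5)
Your proof is correct, but it takes a genuinely different route from the paper. The paper quotes the Volkenborn-integral representation from Robert's book, $(\log_p\Gamma_p)''(x)=\int_{\Z_p^{\times}}\frac{dt}{x+t}=-\sum_{m\ge1}x^{2m-1}\int_{\Z_p^{\times}}t^{-2m}\,dt$, observes that this is an odd power series in $x$, and concludes that it vanishes at $x=0$ in one line. You instead derive the same vanishing from the two standard functional equations: local constancy of $\Gamma_p(x)\Gamma_p(1-x)$ near $0$ gives $\psi_p(x)=\psi_p(1-x)$, the translation formula $\Gamma_p(x+1)=-\Gamma_p(x)$ on $p\Z_p$ gives $\psi_p(1-x)=\psi_p(-x)$, hence $\psi_p$ is even near $0$ and $\psi_p'(0)=G_2(0)-G_1(0)^2=0$. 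Your computations check out (in particular $\Gamma_p(1)=-1$, $\Gamma_p'(1)=-\Gamma_p'(0)$, $\Gamma_p''(1)=-\Gamma_p''(0)$, and the second derivative of the product collapsing to $-2\Gamma_p''(0)+2\Gamma_p'(0)^2$), and the one genuine prerequisite you invoke, the local analyticity of $\Gamma_p$ on $\Z_p$, is standard and is already used implicitly throughout the paper. Note that your evenness statement $\psi_p(x)=\psi_p(-x)$ is essentially the integrated form of the oddness of Robert's series, so the two arguments are two faces of the same symmetry; yours is more self-contained and elementary (no integral representation needed), while the paper's is shorter given the cited formula and yields the full Taylor expansion of $(\log_p\Gamma_p)''$ rather than just its value at the origin.
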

\begin{proof}
By \cite[p.379]{R} we know that
$$
(\log_p\Gamma_p)''(x)=\int_{\Z_p^{\times}}\f{1}{x+t}=-\sum_{m\geq1}x^{2m-1}\int_{\Z_p^{\times}}t^{-2m}dt.
$$
So we have
\begin{align*}
G_2(0)-G_1(0)^2=(\log_p\Gamma_p)''(0)=0.
\end{align*}
\end{proof}

\begin{lemma}\label{1mkn1knj2}
Suppose that $n\geq3$ and $p>3$ be a prime with $p\eq1\pmod{n}$. Then we have
\begin{equation}\label{harcon1}
\sum_{k=0}^{p-1}\f{(1+m)_k^n}{(1)_k^n}\sum_{j=1}^k\f{1}{j}\eq\sum_{k=0}^{p-1}\f{(1+m)_k^n}{(1)_k^n}\sum_{j=1}^{k+m}\f{1}{j}\pmod{p}
\end{equation}
and
\begin{equation}\label{harcon2}
\sum_{k=0}^{p-1}\f{(1+m)_k^n}{(1)_k^n}\l(\sum_{j=1}^k\f{1}{j}\r)^2\eq\sum_{k=0}^{p-1}\f{(1+m)_k^n}{(1)_k^n}\l(\sum_{j=1}^{k+m}\f{1}{j}\r)^2\pmod{p}
\end{equation}
and
\begin{equation}\label{con3}
\sum_{k=0}^{p-1}\f{(1+m)_k^n}{(1)_k^n}\sum_{j=1}^k\f{1}{j^2}\eq-\sum_{k=0}^{p-1}\f{(1+m)_k^n}{(1)_k^n}\sum_{j=1}^{k+m}\f{1}{j^2}\pmod{p},
\end{equation}
where $m=(p-1)/n$.
\end{lemma}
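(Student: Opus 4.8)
The plan is to deduce all three congruences \eqref{harcon1}, \eqref{harcon2} and \eqref{con3} at once from a single reflection symmetry of the summand, so that the Karlsson--Minton machinery of Lemma \ref{KM} is not actually needed here (although \eqref{harcon1} alone could alternatively be obtained by differentiating Lemma \ref{KM} once in a free parameter and reducing modulo $p$). Abbreviate the common weight as $c_k:=(1+m)_k^n/(1)_k^n=\binom{k+m}{m}^n$; each of the three statements compares $\sum_{k=0}^{p-1}c_k\,\sigma(k)$ with $\pm\sum_{k=0}^{p-1}c_k\,\sigma(k+m)$, where $\sigma$ runs through $\sum_{j\le\cdot}1/j$, its square, and $\sum_{j\le\cdot}1/j^2$.

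First I would truncate the range. The product $m+k$ first meets $p$ at $k=p-m=(n-1)m+1$, so by Kummer's theorem $v_p(c_k)=n$ for $(n-1)m<k\le p-1$. Since each harmonic factor evaluated at $k+m$ contributes a pole of order at most $2$ there, and $n\ge3$, every term with $k>(n-1)m$ is divisible by $p$ in all three sums; thus I may restrict to $0\le k\le(n-1)m=p-1-m$, a range on which $k+m\le p-1$ and hence every summand is a $p$-adic integer.

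The heart of the matter is the substitution $k\mapsto(n-1)m-k$, which is a bijection of $\{0,\dots,(n-1)m\}$ and interchanges the indices $k$ and $k+m$ at the level of the harmonic sums. Concretely I would record three congruences modulo $p$: from $\binom{p-1-k}{m}\equiv(-1)^m\binom{k+m}{m}$ together with $(-1)^{mn}=(-1)^{p-1}=1$ one gets $c_{(n-1)m-k}\equiv c_k$; from $\sum_{j=1}^{p-1}1/j\equiv0$ and $1/(p-i)\equiv-1/i$ one gets $\sum_{j=1}^{p-1-k}1/j\equiv\sum_{j=1}^{k}1/j$; and from $\sum_{j=1}^{p-1}1/j^2\equiv0$ (this is where $p>3$ enters) and $1/(p-i)^2\equiv1/i^2$ one gets $\sum_{j=1}^{p-1-k}1/j^2\equiv-\sum_{j=1}^{k}1/j^2$. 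Since $(n-1)m-k+m=p-1-k$, reflecting the right-hand sum of \eqref{harcon1} turns $\sum_{j\le k+m}1/j$ into $\sum_{j\le p-1-k}1/j\equiv\sum_{j\le k}1/j$ while $c_{(n-1)m-k}\equiv c_k$, giving \eqref{harcon1}; the identical reflection applied to the square gives \eqref{harcon2}; and for \eqref{con3} the extra minus sign in the $1/j^2$ reflection produces precisely the asserted sign.

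I expect the only delicate points to be bookkeeping rather than ideas. The truncation step must genuinely use $n\ge3$: one has to check that the order-$n$ vanishing of $c_k$ for $k>(n-1)m$ outweighs the order-$2$ poles of the squared and second-power harmonic sums, a margin that disappears when $n\le2$. The other place to be careful is the sign in the three reflection congruences, in particular the minus sign attached to $\sum 1/j^2$, which is exactly what distinguishes \eqref{con3} from \eqref{harcon1} and \eqref{harcon2}.
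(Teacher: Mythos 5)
Your proof is correct and takes essentially the same route as the paper: truncate to $0\le k\le p-1-m$ using the $p$-divisibility of $\binom{k+m}{m}$, reflect the summation index, and invoke $\sum_{j=1}^{p-1}1/j\equiv\sum_{j=1}^{p-1}1/j^2\equiv0\pmod p$ together with $\binom{p-1-k}{m}\equiv(-1)^{m}\binom{k+m}{m}$ and $(-1)^{mn}=1$. If anything, your explicit verification that $v_p\bigl(\binom{k+m}{m}^n\bigr)=n\ge3$ dominates the order-$\le2$ poles of the harmonic factors when $k+m\ge p$ is more careful than the paper's treatment of the same truncation.
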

\begin{proof}
We only give the proof of \eqref{con3}, \eqref{harcon1} and \eqref{harcon2} can be derived in a similar way.
Clearly
$$
\f{(1+m)_k}{(1)_k}=\bi{m+k}{m}\eq0\pmod{p}$$ for each $p-m\leq k\leq p-1$. So
$$
\sum_{k=0}^{p-1}\f{(1+m)_k^n}{(1)_k^n}\sum_{j=1}^k\f{1}{j^2}\eq\sum_{k=0}^{p-1-m}\bi{m+k}{m}^n\sum_{j=1}^k\f{1}{j^2}\pmod{p}.
$$
And
\begin{align*}
&\sum_{k=0}^{p-1-m}\bi{m+k}{m}^n\sum_{i=1}^k\f{1}{j^2}=\sum_{k=m}^{p-1}\bi{m+p-1-k}{m}^n\sum_{j=1}^{p-1-k}\f{1}{j^2}\\
\eq&-\sum_{k=m}^{p-1}\bi{m+p-1-k}{m}^n\sum_{j=1}^{k}\f{1}{j^2}
=-\sum_{k=0}^{p-1-m}\bi{p-1-k}{m}^n\sum_{j=1}^{k+m}\f{1}{j^2}\\
\equiv&-\sum_{k=0}^{p-1-m}\bi{-1-k}{m}^n\sum_{j=1}^{k+m}\f{1}{j^2}\pmod{p}.
\end{align*}
Since $mn=p-1$ is even and $n\geq3$, we get
\begin{align*}
\sum_{k=0}^{p-1}\f{(1+m)_k^n}{(1)_k^n}\equiv-\sum_{k=0}^{p-1-m}(-1)^{mn}\bi{m+k}{m}^n\sum_{j=1}^{k+m}\f{1}{j^2}\equiv-\sum_{k=0}^{p-1}\bi{m+k}{m}^n\sum_{j=1}^{k+m}\f{1}{j^2}\pmod{p}.
\end{align*}
\end{proof}
\begin{lemma} For any integer $k\geq 0$ and $\alpha,\beta\in\R$,
\begin{equation}\label{akd1}
\frac1{(1+\alpha+x)_k}\frac{d}{d x}(1+\alpha+x)_k=\sum_{j=1}^{k}\frac{1}{j+\alpha+x},
\end{equation}
\begin{equation}\label{akd2}
\frac1{(1+\alpha+x)_k}\frac{d^2}{d x^2}(1+\alpha+x)_k=\bigg(\sum_{j=1}^{k}\frac{1}{j+\alpha+x}\bigg)^2-\sum_{j=1}^{k}\frac{1}{(j+\alpha+x)^2},
\end{equation}
\begin{equation}\label{bkd1}
(1+\beta+x)_k\frac{d}{d x}\bigg(\frac1{(1+\beta+x)_k}\bigg)=-\sum_{j=1}^{k}\frac{1}{j+\beta+x},
\end{equation}
\begin{equation}\label{bkd2}
(1+\beta+x)_k\frac{d^2}{d x^2}\bigg(\frac1{(1+\beta+x)_k}\bigg)=\bigg(\sum_{j=1}^{k}\frac{1}{j+\beta+x}\bigg)^2+\sum_{j=1}^{k}\frac{1}{(j+\beta+x)^2},
\end{equation}
\begin{equation}\label{abkd1}
\frac{(1+\beta+x)_k}{(1+\alpha+x)_k}\frac{d}{d x}\bigg(\frac{(1+\alpha+x)_k}{(1+\beta+x)_k}\bigg)=\sum_{j=1}^{k}\frac{1}{j+\alpha+x}-\sum_{j=1}^{k}\frac{1}{j+\beta+x},
\end{equation}
\begin{align}\label{abkd2}
&\frac{(1+\beta+x)_k}{(1+\alpha+x)_k}\frac{d^2}{d x^2}\bigg(\frac{(1+\alpha+x)_k}{(1+\beta+x)_k}\bigg)\notag\\
=&\bigg(\sum_{j=1}^{k}\frac{1}{j+\alpha+x}-\sum_{j=1}^{k}\frac{1}{j+\beta+x}\bigg)^2+\sum_{j=1}^{k}\frac{1}{(j+\beta+x)^2}-\sum_{j=1}^{k}\frac{1}{(j+\alpha+x)^2}
\end{align}
\begin{proof}
Those identities can be verified directly.
\end{proof}
\end{lemma}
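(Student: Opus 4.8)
The plan is to derive all six identities from a single mechanism. For any nonvanishing differentiable function $F(x)$, the logarithmic derivative satisfies
\[
\frac{F'(x)}{F(x)}=\big(\log F(x)\big)',\qquad \frac{F''(x)}{F(x)}=\big(\log F(x)\big)''+\Big(\big(\log F(x)\big)'\Big)^2,
\]
the second formula coming from differentiating the identity $F'=F\cdot(\log F)'$ once more. The essential point is that each of the three functions occurring in the lemma factors as a product of linear terms, so its logarithm is a sum whose first and second derivatives are elementary.

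Concretely, I would begin from the factorization $(1+\gamma+x)_k=\prod_{j=1}^{k}(j+\gamma+x)$, valid for every $\gamma$ and every integer $k\geq 0$ (the empty product being $1$ when $k=0$). Taking logarithms gives $\log(1+\gamma+x)_k=\sum_{j=1}^{k}\log(j+\gamma+x)$, whence
\[
\big(\log(1+\gamma+x)_k\big)'=\sum_{j=1}^{k}\frac{1}{j+\gamma+x},\qquad \big(\log(1+\gamma+x)_k\big)''=-\sum_{j=1}^{k}\frac{1}{(j+\gamma+x)^2}.
\]
Setting $\gamma=\alpha$ and $F=(1+\alpha+x)_k$ and feeding these two expressions into the pair of logarithmic-derivative formulas immediately yields \eqref{akd1} and \eqref{akd2}.

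For the reciprocal case I would take $F=1/(1+\beta+x)_k$, so that $\log F=-\log(1+\beta+x)_k$, giving $(\log F)'=-\sum_{j=1}^{k}1/(j+\beta+x)$ and $(\log F)''=+\sum_{j=1}^{k}1/(j+\beta+x)^2$. The first formula, after multiplying by $(1+\beta+x)_k$, produces \eqref{bkd1}, and the second produces \eqref{bkd2}; here the sign of $(\log F)''$ has flipped relative to the previous case, which is precisely why the squared sum and the sum of squares now add rather than subtract. For the ratio case I would take $F=(1+\alpha+x)_k/(1+\beta+x)_k$, so that $\log F=\log(1+\alpha+x)_k-\log(1+\beta+x)_k$ and the single-sum derivatives subtract term by term; the same two formulas then deliver \eqref{abkd1} and \eqref{abkd2}.

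The computation is entirely routine and there is no genuine obstacle; the only place demanding attention is the sign bookkeeping in the second-derivative formulas. One must track that $(\log F)''$ contributes $-\sum_{j=1}^{k}1/(j+\alpha+x)^2$ in \eqref{akd2}, $+\sum_{j=1}^{k}1/(j+\beta+x)^2$ in \eqref{bkd2}, and the difference $\sum_{j=1}^{k}1/(j+\beta+x)^2-\sum_{j=1}^{k}1/(j+\alpha+x)^2$ in \eqref{abkd2}, and check that each combines correctly with the squared first-derivative term $\big((\log F)'\big)^2$. Since every identity is an equality of rational functions of $x$ away from the poles at the relevant negative integer shifts, verifying it at the level of formal logarithmic derivatives suffices.
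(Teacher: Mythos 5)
Your proof is correct: the logarithmic-derivative identities $F'/F=(\log F)'$ and $F''/F=(\log F)''+((\log F)')^2$ applied to the factorization $(1+\gamma+x)_k=\prod_{j=1}^k(j+\gamma+x)$ yield all six formulas with the right signs, and your closing remark that these are rational-function identities disposes of any worry about taking logarithms of negative factors. The paper itself only says the identities ``can be verified directly,'' and your argument is exactly the natural way to carry out that direct verification, so there is nothing to reconcile.
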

Now we are ready to prove Proposition \ref{1n1n1jn12}.
\begin{proof}[Proof of Proposition \ref{1n1n1jn12}]
Let $m=(p-1)/n$.
According to Lemmas \ref{G1xG2x} and \ref{G10G20}, we have
$$
G_2\bigg(\frac1n\bigg)-G_1\bigg(\frac1n\bigg)^2\equiv-
\sum_{k=1}^{\langle n^{-1}\rangle_p-1}\frac1{k^2}\equiv
\sum_{k=1}^{m}\frac1{k^2}\pmod{p}.
$$
And since
\begin{equation}\label{Gammapx1x}
\Gamma_p(x)\Gamma_p(1-x)=(-1)^{\langle-x\rangle_p-1}
\end{equation}
for any $x\in\Z_p$,
$$
\frac{(p-1)!}{(1)_m^n}\equiv-\frac{(-1)^{(m+1)n}}{\Gamma_{p}(m+1)^n}=-\Gamma_p(-m)^n\equiv -\Gamma_p\bigg(\frac1n\bigg)^n\pmod{p}.
$$
Thus it suffices to show that
\begin{equation}\label{1mkn1knjm2}
\sum_{k=0}^{p-1}\frac{(1-p)_k(1+m)_k^n}{(1)_k^{n+1}}\sum_{j=1}^k\frac{1}{(j+m)^2}\equiv-\frac{(p-1)!}{(1)_m^n}\sum_{k=1}^m\frac1{k^2}\pmod{p}.
\end{equation}

Set
\begin{align*}
\Psi(x,y):=_{n+1}F_n\bigg[\begin{matrix} 1-p&1+m+x&1+m+y&1+m&\cdots&1+m\\ &1+x&1+y&1&\cdots&1\end{matrix}\bigg | \, 1\bigg]
\end{align*}
It follows from (\ref{abkd1}) and (\ref{abkd2}) that
$$
\frac{\partial^2\Psi(x,y)}{\partial x\partial y}\bigg|_{(x,y)=(0,0)}=\sum_{k=0}^{p-1}\frac{(1-p)_k(1+m)_k^n}{(1)_k^{n+1}}\bigg(\sum_{j=1}^k\frac{1}{j+m}-\sum_{j=1}^k\frac1j\bigg)^2
$$
and
\[\label{lastsection1}
\frac{\partial^2\Psi(x,y)}{\partial x^2}\bigg|_{(x,y)=(0,0)}=\sum_{k=0}^{p-1}\frac{(1-p)_k(1+m)_k^n}{(1)_k^{n+1}}\bigg(\bigg(\sum_{j=1}^k\frac{1}{j+m}-\sum_{j=1}^k\frac1j\bigg)^2+\sum_{j=1}^k\frac1{j^2}-\sum_{j=1}^k\frac1{(j+m)^2}\bigg).
\]
Hence
\begin{equation}\label{Psixyx2A}
\frac{\partial^2\Psi(x,y)}{\partial x\partial y}\bigg|_{(x,y)=(0,0)}-\frac{\partial^2\Psi(x,y)}{\partial x^2}\bigg|_{(x,y)=(0,0)}=\sum_{k=0}^{p-1}\frac{(1-p)_k(1+m)_k^n}{(1)_k^{n+1}}\bigg(\sum_{j=1}^k\frac{1}{(j+m)^2}-\sum_{j=1}^k\frac1{j^2}\bigg).
\end{equation}

On the other hand, with help of Lemma \ref{KM}, we have
\begin{equation}\label{PsixyKM}
\Psi(x,y)=\f{(p-1)!}{(1+x)_m(1+y)_m(1)_m^{n-2}}.
\end{equation}
Applying (\ref{bkd1}) and (\ref{bkd2}), we get
$$
\frac{\partial^2\Psi(x,y)}{\partial x\partial y}\bigg|_{(x,y)=(0,0)}=
\f{(p-1)!}{(1)_m^{n}}\bigg(\sum_{j=1}^m\frac1j\bigg)^2
$$
and
\[\label{lastsection2}
\frac{\partial^2\Psi(x,y)}{\partial x^2}\bigg|_{(x,y)=(0,0)}=\f{(p-1)!}{(1)_m^{n}}\bigg(\bigg(\sum_{j=1}^m\frac1j\bigg)^2+\sum_{j=1}^m\frac1{j^2}\bigg).
\]
So
\begin{equation}\label{Psixyx2B}
\frac{\partial^2\Psi(x,y)}{\partial x\partial y}\bigg|_{(x,y)=(0,0)}-\frac{\partial^2\Psi(x,y)}{\partial x^2}\bigg|_{(x,y)=(0,0)}=-\f{(p-1)!}{(1)_m^{n}}\sum_{j=1}^m\frac1{j^2}.
\end{equation}

Combining (\ref{Psixyx2A}) and (\ref{Psixyx2B}), we obtain that
\begin{equation}\label{1mkn1knjm2j2}
\sum_{k=0}^{p-1}\frac{(1-p)_k(1+m)_k^n}{(1)_k^{n+1}}\bigg(\sum_{j=1}^k\frac{1}{(j+m)^2}-\sum_{j=1}^k\frac1{j^2}\bigg)\equiv-\f{(p-1)!}{(1)_m^{n}}\sum_{j=1}^m\frac1{j^2}\pmod{p}.
\end{equation}
Furthermore, by (\ref{PsixyKM}),
$$
\sum_{k=0}^{p-1}\frac{(1-p)_k(1+m)_k^n}{(1)_k^{n+1}}=\Psi(0,0)=\f{(p-1)!}{(1)_m^{n}}.
$$
Therefore
\begin{align*}
0\equiv&\sum_{k=0}^{p-1}\frac{(1-p)_k(1+m)_k^n}{(1)_k^{n+1}}\bigg(\sum_{j=1}^k\frac{1}{(j+m)^2}-\sum_{j=1}^k\frac1{j^2}+\sum_{j=1}^m\frac1{j^2}\bigg)\\
\equiv&
\sum_{k=0}^{p-1}\frac{(1+m)_k^n}{(1)_k^{n}}\bigg(\sum_{j=1}^{k+m}\frac{1}{j^2}-\sum_{j=1}^k\frac1{j^2}\bigg)
\pmod{p}.
\end{align*}

However, in view of Lemma \ref{1mkn1knj2},
$$
\sum_{k=0}^{p-1}\frac{(1+m)_k^n}{(1)_k^{n}}\sum_{j=1}^k\frac1{j^2}\equiv-\sum_{k=0}^{p-1}\frac{(1+m)_k^n}{(1)_k^{n}}\sum_{j=1}^{k+m}\frac{1}{j^2}\pmod{p}.
$$
So we must have
\begin{equation}\label{1mkn1knj20}
\sum_{k=0}^{p-1}\frac{(1+m)_k^n}{(1)_k^{n}}\sum_{j=1}^k\frac1{j^2}\equiv 0\pmod{p}
\end{equation}
Substituting (\ref{1mkn1knj20}) into (\ref{1mkn1knjm2j2}), we get (\ref{1mkn1knjm2}).

\end{proof}

\section{Proof of Theorem \ref{main}}
\setcounter{lemma}{0} \setcounter{theorem}{0}
\setcounter{equation}{0}\setcounter{proposition}{0}
In this section, we shall complete the proof of Theorem \ref{main}.
Let $$m=\f{p-1}{n}.$$
Set
$$
\Phi(x,y):=_{n}F_{n-1}\bigg[\begin{matrix} 1+m-x&1+m-y&\cdots&1+m-y\\ &1&\cdots&1\end{matrix}\bigg | \, 1\bigg]_{p-1}
$$
and
$$
\Omega(x):=_{n}F_{n-1}\bigg[\begin{matrix} 1+m-x&1+m-x&\cdots&1+m-x\\ &1&\cdots&1\end{matrix}\bigg | \, 1\bigg]_{p-1}.
$$
Clearly
$$
\frac{d\Omega(x)}{d x}=n\frac{\partial\Phi(x,y)}{\partial x}\bigg|_{y=x}.
$$
And it is easy to check that
$$
\frac{d^2\Omega(x)}{d x^2}=n\frac{\partial^2\Phi(x,y)}{\partial x\partial y}\bigg|_{y=x}+n\frac{\partial^2\Phi(x,y)}{\partial^2 x}\bigg|_{y=x}.
$$
Hence \begin{align}
&{}_{n}F_{n-1}\bigg[\begin{matrix} 1-n^{-1}&1-n^{-1}&\cdots&1-n^{-1}\\ &1&\cdots&1\end{matrix}\bigg | \, 1\bigg]_{p-1}\notag\\
=&\Omega\bigg(\frac{p}n\bigg)
\equiv\Omega(0)+\frac{\Omega'(0)}{n}\cdot p+\frac{\Omega''(0)}{2n^2}\cdot p^2
\notag\\
=&\Phi(0,0)+\Phi_x'(0,0)p+\frac{\Phi_{xy}''(0,0)+\Phi_{xx}''(0,0)}{2n}\cdot p^2\notag\\
\equiv&\Phi(p,0)+\frac{1}{2n}\cdot\big(\Phi_{xy}''(0,0)-(n-1)\Phi_{xx}''(0,0)\big)\cdot p^2\pmod{p^3}.
\end{align}

On the other hand, with help of Lemma \ref{KM}, we have
\begin{align}\label{eq1}
\Phi(p,0)=&_{n}F_{n-1}\bigg[\begin{matrix} 1+m-p&1+m&\cdots&1+m\\ &1&\cdots&1\end{matrix}\bigg | \, 1\bigg]\notag\\
=&
_{n+1}F_n\bigg[\begin{matrix} 1-p&1+m-p&1+m&\cdots&1+m\\ &1-p&1&\cdots&1\end{matrix}\bigg | \, 1\bigg]=\frac{(p-1)!}{(1-p)_m(m!)^{n-1}}.
\end{align}
So in view of (\ref{akd1}) and (\ref{akd2}), we get
\begin{align}\label{nFn1p3}
&{}_{n}F_{n-1}\bigg[\begin{matrix} 1-n^{-1}&1-n^{-1}&\cdots&1-n^{-1}\\ &1&\cdots&1\end{matrix}\bigg | \, 1\bigg]\notag\\
\equiv&\frac{(p-1)!}{(1-p)_m(m!)^{n-1}}+\frac{n-1}{2n}\cdot p^2\sum_{k=0}^{p-1}\frac{(1+m)^n}{(1)_k^n}\sum_{j=1}^k\frac1{(j+m)^2}\pmod{p^3}.
\end{align}
Furthermore, in view of (\ref{Gammapx1x}),
\begin{align}\label{p1f1pmmfn1}
&\frac{(p-1)!}{(1-p)_m(m!)^{n-1}}=(-1)^{(m+1)n}\cdot\frac{\Gamma_p(p)\Gamma_p(1-p)}{\Gamma_p(1-p+m)\Gamma_p(m+1)^{n-1}}
=-\Gamma_p(-m)^{n-1}\Gamma_p(p-m)\notag\\
\equiv&-\bigg(\Gamma_p\bigg(\frac1n\bigg)-\Gamma_p'\bigg(\frac1n\bigg)\frac pn+\Gamma_p''\bigg(\frac1n\bigg)\frac{p^2}{2n^2}\bigg)^{n-1}\bigg(\Gamma_p\bigg(\frac1n\bigg)+\Gamma_p'\bigg(\frac1n\bigg)\frac{(n-1)p}n+\Gamma_p''\bigg(\frac1n\bigg)\frac{(n-1)^2p^2}{2n^2}\bigg)\notag\\
\equiv&-\Gamma_p\bigg(\frac1n\bigg)^n\bigg(1+\frac{n-1}{2n}\cdot p^2G_2\bigg(\frac1n\bigg)-\frac{n-1}{2n}\cdot p^2G_1\bigg(\frac1n\bigg)^2\bigg)\pmod{p^3}.
\end{align}
Combining (\ref{nFn1p3}), (\ref{p1f1pmmfn1}) and Proposition \ref{1n1n1jn12}, Theorem \ref{main} is concluded.\qed

\section{Proof of Theorem \ref{main2}}
In this section, we shall prove that
\begin{equation}\label{modp3}
\sum_{k=0}^{p-1}\l(p\cdot\f{k!}{(1+n^{-1})_k}\r)^n\eq-\Gamma_p\l(\f{1}{n}\r)^n\pmod{p^3}.
\end{equation}
It is easy to check that
$$
\f{(1)_k}{(1+n^{-1})_k}=\f{(1)_{p-1}}{(1-p)_{p-1-k}}\cdot\f{(1-n^{-1}-p)_{p-1-k}}{(1+n^{-1})_{p-1}},
$$
so we deduce that
$$
\sum_{k=0}^{p-1}\l(p\f{k!}{(1+n^{-1})_k}\r)^n= p^n\f{(1)_{p-1}^n}{(1+n^{-1})_{p-1}^n}\sum_{k=0}^{p-1}\f{(1-n^{-1}-p)_k^n}{(1-p)_k^n}.
$$
Now
\begin{align*}
&p^n\f{(1)_{p-1}^n}{(1+n^{-1})_{p-1}^n}=\f{(-n)^n\Gamma_p(1+n^{-1})^n}{\Gamma_p(p+n^{-1})^n\Gamma_p(-p)^n}\\
\eq&\f{1}{\l((1+pG_1(n^{-1})+p^2G_2(n^{-1})/2)\r)^n\cdot\l(1-pG_1(0)+p^2G_2(0)/2\r)^n}.
\end{align*}
Note that
$$
G_1\l(\f{1}{n}\r)\eq G_1(0)+\sum_{\substack{1\leq j<\langle n^{-1}\rangle_{p^2}\\(j,p)=1}}\f{1}{j}\pmod{p^2}
$$
and
$$
G_2\l(\f{1}{n}\r)\eq G_2(0)+2G_1(0)\sum_{1\leq j<\langle n^{-1}\rangle_p}\f{1}{j}+2\sum_{1\leq i<j<\langle n^{-1}\rangle_p}\f{1}{ij}\pmod{p}
$$
by Lemma \ref{G1xG2x}, then we obtain that
\begin{equation}\label{firstpart}
p^n\f{(1)_{p-1}^n}{(1+n^{-1})_{p-1}^n}\eq 1-pn\sum_{\substack{1\leq j<\langle n^{-1}\rangle_{p^2}\\(j,p)=1}}\f{1}{j}+\f{(pn)^2}{2}\l(\sum_{j=1}^m\f{1}{j}\r)^2-\f{p^2n}{2}\sum_{j=1}^m\f{1}{j^2}\pmod{p^3}.
\end{equation}
Now we set
$$
\Upsilon(x)=\sum_{k=0}^{p-1}\f{(1-n^{-1}-x)_k^n}{(1-x)_k^n}.
$$
In view of \eqref{abkd1} and \eqref{abkd2} we obtain that
$$
\Upsilon'(0)=n\sum_{k=0}^{p-1}\f{(1-n^{-1})_k^n}{(1)_k^n}\l(\sum_{i=1}^k\f{1}{i}-\sum_{i=1}^k\f{1}{i-n^{-1}}\r)
$$
and
\begin{align*}
\Upsilon''(0)=&n^2\sum_{k=0}^{p-1}\f{(1-n^{-1})_k^n}{(1)_k^n}\l(\sum_{i=1}^k\f{1}{i}-\sum_{i=1}^k\f{1}{i-n^{-1}}\r)^2\\
&+n\sum_{k=0}^{p-1}\f{(1-n^{-1})_k^n}{(1)_k^n}\l(\sum_{i=1}^k\f{1}{i^2}-\sum_{i=1}^k\f{1}{(i-n^{-1})^2}\r).
\end{align*}
By Theorem \ref{main} and \eqref{harcon1} we immediately arrive at
$$
\Upsilon''(0)\eq-\Gamma_p\l(\f{1}{n}\r)^n\l(\f{(pn)^2}{2}\l(\sum_{i=1}^m\f{1}{i}\r)^2+\f{p^2n}{2}\sum_{i=1}^m\f{1}{i^2}\r)\pmod{p}.
$$
Furthermore, we have
\begin{equation}\label{secondpart}
\begin{aligned}
&\Upsilon(p)\eq\Upsilon(0)+p\Upsilon'(0)+\f{p^2}{2}\Upsilon''(0)\\
\eq&-\Gamma_p\l(\f{1}{n}\r)^n\left(1-\Gamma_p\l(\f{1}{n}\r)^{-n}pn\sum_{k=0}^{p-1}\f{(1-n^{-1})_k^n}{(1)_k^n}\l(\sum_{i=1}^k\f{1}{i}-\sum_{i=1}^k\f{1}{i-n^{-1}}\r)\right.\\
&\left.+\f{(pn)^2}{2}\l(\sum_{i=1}^m\f{1}{i}\r)^2+\f{p^2n}{2}\sum_{i=1}^m\f{1}{i^2}\right)\pmod{p^3}.
\end{aligned}
\end{equation}
Combining \eqref{firstpart} and \eqref{secondpart} we deduce that
\begin{align*}
\sum_{k=0}^{p-1}\l(p\f{k!}{(1+n^{-1})_k}\r)^n\eq&-\Gamma_p\l(\f{1}{n}\r)^n\Bigg(1-pn\sum_{\substack{1\leq j<\langle n^{-1}\rangle_{p^2}\\(j,p)=1}}\f{1}{j}\\
&-\Gamma_p\l(\f{1}{n}\r)^{-n}pn\sum_{k=0}^{p-1}\f{(1-n^{-1})_k^n}{(1)_k^n}\l(\sum_{i=1}^k\f{1}{i}-\sum_{i=1}^k\f{1}{i-n^{-1}}\r)\Bigg)\pmod{p^3}.
\end{align*}
Then it suffices to show that
\begin{equation}\label{modp2}
\sum_{k=0}^{p-1}\f{(1-n^{-1})_k^n}{(1)_k^n}\sum_{\substack{1\leq j<\langle n^{-1}\rangle_{p^2}\\(j,p)=1}}\f{1}{j}\eq \sum_{k=0}^{p-1}\f{(1-n^{-1})_k^n}{(1)_k^n}\l(\sum_{i=1}^k\f{1}{i}-\sum_{i=1}^k\f{1}{i-n^{-1}}\r)\pmod{p^2}.
\end{equation}
Clearly, we have
\begin{equation}\label{lefthand}
\begin{aligned}
\sum_{k=0}^{p-1}\f{(1-n^{-1})_k^n}{(1)_k^n}\sum_{\substack{1\leq j<\langle n^{-1}\rangle_{p^2}\\(j,p)=1}}\f{1}{j}\eq&\sum_{k=0}^{p-1}\binom{s+k}{s}^n\sum_{\substack{1\leq j<p^2-s\\(j,p)=1}}\f{1}{j}\\
\eq&\binom{s+k}{s}^n\l((m-s)\sum_{i=1}^m\f{1}{i^2}+\sum_{i=1}^m\f{1}{i}\r)\pmod{p^2}.
\end{aligned}
\end{equation}
by noting that $\sum_{k=1}^{p-1}k^{-1}\eq0\pmod{p^2}$ and $\sum_{k=1}^{p-1}k^{-2}\eq0\pmod{p}$.

Now we consider the right-hand side of \eqref{modp2}. For the sake of convenience, we let
$$
\Sigma_1:=\sum_{k=0}^{p-1}\f{(1-n^{-1})_k^n}{(1)_k^n}\sum_{i=1}^k\f{1}{i}
$$
and
$$
\Sigma_2:=\sum_{k=0}^{p-1}\f{(1-n^{-1})_k^n}{(1)_k^n}\sum_{i=1}^k\f{1}{i-n^{-1}}.
$$
We first evaluate $\Sigma_1$ modulo $p^2$. In light of \eqref{1mkn1knj20} we have
\begin{align*}
\Sigma_1\eq&\sum_{k=0}^{p-1}\binom{k+s}{s}^n\sum_{i=1}^k\f{1}{i}\eq\sum_{k=0}^{p-1-m}\binom{k+s}{s}^n\sum_{i=1}^{k}\f{1}{i}=\sum_{k=m}^{p-1}\binom{p-1-k+s}{s}^n\sum_{i=1}^{p-1-k}\f{1}{i}\\
\eq&\sum_{k=m}^{p-1}\binom{p-1-k+s}{s}^n\l(p\sum_{i=1}^k\f{1}{i^2}+\sum_{i=1}^k\f{1}{i}\r)\eq\sum_{k=m}^{p-1}\binom{p-1-k+s}{s}^n\sum_{i=1}^k\f{1}{i}\pmod{p^2}.
\end{align*}
Substituting $k$ for $k-m$ we obtain that
\begin{equation}\label{righthand1}
\begin{aligned}
\Sigma_1\eq&\sum_{k=0}^{p-1-m}\binom{p-1-k-m+s}{s}^n\sum_{i=1}^{k+m}\f{1}{i}\\
\eq&\sum_{k=0}^{p-1-m}\binom{k+m}{m}^n\Bigg(1-pn\l(\sum_{i=1}^{k+m}\f{1}{i}-\sum_{i=1}^k\f{1}{i}\r)\\
&+n(s-m)\l(\sum_{i=1}^k\f{1}{i}-\sum_{i=1}^m\f{1}{i}\r)\Bigg)\sum_{i=1}^{k+m}\f{1}{i}\pmod{p^2}.
\end{aligned}
\end{equation}
Below we consider $\Sigma_2$ modulo $p^2$. It is clear that
\begin{equation}\label{righthand2}
\begin{aligned}
\Sigma_2\eq&\sum_{k=0}^{p-1-m}\binom{k+s}{s}^n\sum_{i=1}^k\f{1}{i+s}=\sum_{k=0}^{p-1}\binom{k+s}{s}^n\l(\sum_{\substack{i=1\\(i,p)=1}}^{k+s}\f{1}{i}-\sum_{\substack{i=1\\(i,p)=1}}^{s}\f{1}{i}\r)\\
\eq&\sum_{k=0}^{p-1}\binom{k+s}{s}^n\sum_{\substack{i=1\\(i,p)=1}}^{k+s}\f{1}{i}-\sum_{k=0}^{p-1}\binom{k+s}{s}^n\l((m-s)\sum_{i=1}^m\f{1}{i^2}+\sum_{i=1}^m\f{1}{i}\r).
\end{aligned}
\end{equation}
With the help of \eqref{1mkn1knj20} we arrive at
\begin{align}\label{righthand3}
&\sum_{k=0}^{p-1}\binom{k+s}{s}^n\sum_{\substack{i=1\\(i,p)=1}}^{k+s}\f{1}{i}\eq\sum_{k=0}^{p-1-m}\binom{k+s}{s}^n\sum_{i=s-m+1}^{k+s}\f{1}{i}=\sum_{k=0}^{p-1-m}\binom{k+s}{s}^n\sum_{i=1}^{k+m}\f{1}{s-m+i}\notag\\
\eq&\sum_{k=0}^{p-1-m}\binom{k+s}{s}^n\l(-\sum_{i=1}^{k+m}\f{s-m-i}{i^2}\r)\eq\sum_{k=0}^{p-1-m}\binom{k+s}{s}^n\sum_{i=1}^{k+m}\f{1}{i}\notag\\
\eq&\sum_{k=0}^{p-1-m}\binom{k+m}{k}^n\l(1+n(s-m)\l(\sum_{i=1}^{k+m}\f{1}{i}-\sum_{i=1}^m\f{1}{i}\r)\r)\sum_{i=1}^{k+m}\f{1}{i}\pmod{p^2}
\end{align}
Substituting \eqref{lefthand}, \eqref{righthand1}, \eqref{righthand2} and \eqref{righthand3} into \eqref{modp2} we find that it suffice to show that
\begin{equation}\label{modp1}
\sum_{k=0}^{p-1-m}\binom{m+k}{k}^n\l(\sum_{i=1}^{k+m}\f{1}{i}-\sum_{i=1}^k\f{1}{i}\r)\sum_{i=1}^{k+m}\f{1}{i}\eq0\pmod{p}.
\end{equation}
In Section 2, we have defined $\Psi(x,y)$. By \eqref{lastsection1} and in view of \eqref{harcon1}, \eqref{harcon2}, \eqref{con3} and \eqref{1mkn1knj20} we obtain that
\begin{align}\label{final1}
&\f{\partial^2\Psi(x,y)}{\partial x^2}\bigg|_{(x,y)=(0,0)}\notag\\\eq&\sum_{k=0}^{p-1}\f{(1+m)_k^n}{(1)_k^n}\l(2\l(\sum_{i+1}^{k+m}\f{1}{i}\r)^2-2\sum_{i=1}^{k+m}\f{1}{i}\sum_{i=1}^k\f{1}{i}+\l(\sum_{i=1}^{m}\f{1}{i}\r)^2+\sum_{i=1}^{m}\f{1}{i^2}\r)\pmod{p}.
\end{align}
On the other hand, by \eqref{lastsection2} we find that
\begin{equation}\label{final2}
\f{\partial^2\Psi(x,y)}{\partial x^2}\bigg|_{(x,y)=(0,0)}\eq\sum_{k=0}^{p-1}\f{(1+m)_k^n}{(1)_k^n}\l(\sum_{i=1}^m\f{1}{i^2}+\l(\sum_{i=1}^m\f{1}{i}\r)^2\r)\pmod{p}.
\end{equation}
Combining \eqref{final1} and \eqref{final2} we immediately deduce \eqref{modp1}. The proof of Theorem \ref{main2} is now complete. \qed

\end{document}